\newtheorem{theorem}{Theorem}
\newtheorem{lemma}[theorem]{Lemma}
\newtheorem{open}[theorem]{Open Problem}
\newcommand{\tr}{{\mathrm{Tr}}}
\newcommand{\gf}{{\mathrm{GF}}}
\newcommand{\PG}{{\mathrm{PG}}}
\newcommand{\wt}{{\mathtt{wt}}}
\newcommand{\C}{{\mathsf{C}}} 
\newcommand{\cV}{{\mathcal{V}}}
\newcommand{\cT}{{\mathcal{T}}}
\newcommand{\bc}{{\mathbf{c}}}
\newcommand{\bv}{{\mathbf{v}}}
\begin{document}

\begin{frontmatter}



\title{A family of ovoids in $\PG(3, 2^m)$ from cyclic codes}


\author{Cunsheng Ding}
\address{Department of Computer Science and Engineering, 
The Hong Kong University of Science and Technology,
Clear Water Bay, Kowloon, Hong Kong, China}

\begin{abstract}
Ovoids in $\PG(3, q)$ have been an interesting topic in coding theory, combinatorics, 
and finite geometry for a long time. So far only two families are known. The first 
is the elliptic quadratics and the second is the Tits ovoids. In this article, we 
present a family of ovoids in $\PG(3, 2^m)$ for all $m$ which are from a family of 
irreducible cyclic codes.     
\end{abstract}

\begin{keyword}
Cyclic code \sep Linear Code \sep Ovoid  

\MSC  05B05 \sep 51E10 \sep 94B15 

\end{keyword}

\end{frontmatter}

\section{Introduction} 

A cap in $\PG(3, \gf(q))$ is a set of points in $\PG(3, \gf(q))$ such that 
no three are collinear. Let $q>2$. For any cap $\cV$ in $\PG(3, \gf(q))$, 
we have $|\cV| \leq q^2+1$ (see \cite{Bose47}, \cite{Seiden50} and \cite{Qvist52} 
for details).   

In the projective space $\PG(3, \gf(q))$ with $q>2$, an \emph{ovoid}\index{ovoid} $\cV$ 
is a set of $q^2+1$ points such that no three of them are collinear (i.e., on the same line). In other words, an ovoid is a $(q^2+1)$-cap (a cap with $q^2+1$ points) in 
$\PG(3, \gf(q))$, and thus a maximum cap.

A \emph{classical ovoid} $\cV$ can be defined as the set of all points given by 
\begin{eqnarray}
\cV=\{(0,0,1, 0)\} \cup \{(x,\, y,\, x^2+xy +ay^2,\, 1): x,\, y \in \gf(q)\}, 
\end{eqnarray} 
where $a \in \gf(q)$ is such that the polynomial $x^2+x+a$ has no root in $\gf(q)$. 
Such ovoid is called an \emph{elliptic quadric}\index{quadric}, as the points 
come from a non-degenerate elliptic quadratic form.  

For $q=2^{2e+1}$ with $e \geq 1$, there is an ovoid which is not an elliptic quadric, 
and is called the \emph{Tits oviod}\index{Tits ovoid} \cite{Tits60}. It is defined by 
\begin{eqnarray}
\cT=\{(0,0,1,0)\}\cup \{(x,\,y,\, x^{\sigma} + xy +y^{\sigma+2},\,1): x, \, y \in \gf(q)\}, 
\end{eqnarray}   
where $\sigma=2^{e+1}$. 

For odd $q$, any ovoid is an elliptic quadric (see \cite{Barlotti} and \cite{Panella}).
For even $q$, Tits ovoids are the only known ones which are not elliptic quadratics. 
In the case that $q$ is even, 
the elliptic quadrics and the Tits ovoid are not equivalent \cite{Willems}. 
For further information about ovoids, the reader is referred to \cite{OKeefe} and  
\cite{HS98}. The objective of this article is to present a family of ovoids in 
$\PG(3, 2^m)$ from irreducible cyclic codes over $\gf(2^m)$.  

\section{Ovoids in $\PG(3, \gf(q))$ and $[q^2+1, 4, q^2-q]$ Codes}\label{sec-ovoidcodes} 

Let $\cV$ be an ovoid in $\PG(3, \gf(q))$ with $q>2$. Denote by 
$$ 
\cV=\{\bv_1, \bv_2, \cdots, \bv_{q^{2}+1}\} 
$$ 
where each $\bv_i$ is a column vector in $\gf(q)^4$. Let $\C_{\cV}$ be the linear code over $\gf(q)$ 
with generator matrix 
\begin{eqnarray}
G_{\cV}=\left[\bv_1 \bv_2 \cdots \bv_{q^{2}+1}\right].  
\end{eqnarray} 
Note that $\cV$ intersects each plane in either one point or $q+1$ points. It then 
follows that $\C_{\cV}$ has only the nonzero 
weights $q^2-q$ and $q^2$. The code is clearly projective. Solving the first two 
Pless power moments, one obtains the following weight enumerator of the code:   
\begin{eqnarray}\label{eqn-wtdistovidcode}
1+(q^2-q)(q^2+1)z^{q^2-q}+(q-1)(q^2+1)z^{q^2}. 
\end{eqnarray} 
Hence, $\C_{\cV}$ is a $[q^2+1, 4, q^2-q]$ code over $\gf(q)$. 
Its dual is a $[q^2+1, q^2-3, 4]$ code.

Linear codes over $\gf(q)$ with parameters $[q^2+1, 4, q^2-q]$ are special and attractive 
due to the following result \cite[p. 192]{Bier}. 

\begin{lemma}\label{lem-ovoidcodewtlem} 
Any linear code over $\gf(q)$ with parameters $[q^2+1, 4, q^2-q]$ must have the weight enumerator 
of (\ref{eqn-wtdistovidcode}). 
\end{lemma}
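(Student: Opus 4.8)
The plan is to use the two Pless power moments together with the fact that a code with these parameters can have at most two nonzero weights, and then argue those weights must be exactly $q^2-q$ and $q^2$. Let $\C$ be any $[q^2+1, 4, q^2-q]$ code over $\gf(q)$, and let $A_w$ denote the number of codewords of weight $w$. The first step is to show that the only nonzero weights are $q^2-q$ and $q^2$. Since $\dim \C = 4$, the total number of codewords is $q^4$, so $\sum_{w \geq 1} A_w = q^4 - 1$. The minimum weight is $q^2-q$, so every nonzero weight lies in the range $[q^2-q, q^2+1]$; the candidate weights are $q^2-q, q^2-q+1, \dots, q^2+1$. The Griesmer bound (or a direct projective argument) should rule out weight $q^2+1$, i.e.\ no codeword has full support: if some codeword were everywhere nonzero, the residual/shortening arguments on the remaining coordinates would force a length shorter than a $[q^2+1,4,q^2-q]$ code permits. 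More carefully, I would invoke the Singleton-type slack: the dual distance and the projectivity of such codes (every $[q^2+1,4,q^2-q]$ code over $\gf(q)$ with $q>2$ is forced to be projective, since a repeated column would reduce the effective length and violate the minimum distance via the Griesmer bound). This is the step I expect to be the main obstacle — pinning down precisely why the weights collapse to just two values rather than spreading over the four-element range $\{q^2-q,\dots,q^2+1\}$.

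Once it is established that the nonzero weights are among $\{q^2-q, q^2\}$ (equivalently, among $\{q^2-q, q^2-q+1\}$ after the full-support weight and the intermediate weights are excluded — the cleanest route is to show a plane meets the associated point set in $1$ or $q+1$ points, forcing the two weights to differ by exactly $q$), the remaining step is pure linear algebra. Write $A_{q^2-q} = a$ and $A_{q^2} = b$. The zeroth moment gives
\begin{equation}
a + b = q^4 - 1.
\end{equation}
The first Pless power moment gives
\begin{equation}
(q^2-q)\,a + q^2\,b = n\,(q-1)\,q^{k-1} = (q^2+1)(q-1)q^3,
\end{equation}
using $n = q^2+1$ and $k = 4$. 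Solving this linear system for $a$ and $b$ yields $a = (q^2-q)(q^2+1)$ and $b = (q-1)(q^2+1)$, which is exactly the weight enumerator claimed in~(\ref{eqn-wtdistovidcode}).

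To carry this out rigorously I would proceed in the following order: (1) show $\C$ is projective, using the minimum distance and the Griesmer bound to forbid repeated coordinates; (2) identify $\C$ (up to the choice of generator matrix columns) with a set $\cS$ of $q^2+1$ points in $\PG(3,q)$ and translate the minimum distance $q^2-q$ into the statement that every plane (the zero set of a nonzero functional) contains at most $q+1$ points of $\cS$; (3) use a standard counting argument — $\cS$ is then forced to be a cap and in fact an ovoid — so each plane meets $\cS$ in exactly $1$ or $q+1$ points, giving precisely the two weights $q^2$ and $q^2-q$; (4) substitute into the first two Pless power moments and solve the $2\times 2$ system above. Step (3) is where most of the geometric content sits, and it is also where I would lean on the cap bound $|\cV| \leq q^2+1$ quoted in the introduction to ensure no plane section exceeds $q+1$ points; the arithmetic in step (4) is routine.
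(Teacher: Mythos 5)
The paper never proves this lemma itself: it is quoted from \cite[p.~192]{Bier}, and the computation in Section~\ref{sec-ovoidcodes} preceding it only carries out the easy direction (a code built from a known ovoid has the two weights $q^2-q$ and $q^2$, after which the first two Pless power moments give the enumerator). Your proposal reproduces that easy part correctly: the translation of the minimum distance into ``every plane contains at most $n-d=q+1$ points of the column multiset'' and the $2\times 2$ moment system are fine. But the entire content of the lemma is the step you explicitly leave open, namely why an \emph{arbitrary} $[q^2+1,4,q^2-q]$ code has no weights other than $q^2-q$ and $q^2$. That requires two genuine arguments you only gesture at: (a) the column multiset consists of $q^2+1$ distinct points with no three collinear --- this needs the line/plane count you never make (if a line carries $s$ points of the multiset, the $q+1$ planes through it cover everything, so $s+(q+1)(q+1-s)\ge q^2+1$, forcing $s\le 2$, which simultaneously rules out repeated columns); and (b) the classical theorem that a $(q^2+1)$-cap in $\PG(3,q)$, $q>2$, meets every plane in exactly $1$ or $q+1$ points --- i.e.\ no external planes and no intermediate sections --- which rests on the tangent-line/tangent-plane structure of ovoids and is not a routine observation. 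You label both as ``a standard counting argument'' and yourself flag this as the main obstacle, so the proposal is a plan rather than a proof precisely where the lemma is nontrivial (which is why the paper cites Bierbrauer instead of arguing it).

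Some of the auxiliary claims are also off. Puncturing at a repeated column only yields a $[q^2,4,\ge q^2-q-1]$ code, which does \emph{not} violate the Griesmer bound; to force projectivity this way you must shorten (the codewords vanishing at the repeated coordinate give a $[q^2-1,3,\ge q^2-q]$ code, while Griesmer demands length at least $q^2$) --- or simply run the multiset count in (a), which makes projectivity a consequence rather than a prerequisite. The cap bound $|\cV|\le q^2+1$ quoted in the introduction bounds the total size of a cap and has nothing to do with bounding plane sections by $q+1$; that bound comes from the minimum distance alone. Finally, the a priori range of nonzero weights $\{q^2-q,\dots,q^2+1\}$ has $q+2$ elements, not four. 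None of these slips is individually fatal, but together with the missing steps (a) and (b) the core of the lemma remains unproven in your write-up, while the moment computation you do supply is the part the paper already carries out elsewhere.
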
 

The following theorem follows from Lemma \ref{lem-ovoidcodewtlem}. 

\begin{theorem}\label{thm-ovoidcodespel}
The dual of any linear code over $\gf(q)$ with parameters $[q^2+1, 4, q^2-q]$ must have parameters 
$[q^2+1, q^2-3, 4]$.  
\end{theorem}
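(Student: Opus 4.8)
The plan is to handle the three parameters of $\C^\perp$ separately. Fix any linear code $\C$ over $\gf(q)$ with parameters $[q^2+1,4,q^2-q]$. The length of $\C^\perp$ is $q^2+1$ and its dimension is $(q^2+1)-4=q^2-3$ with nothing to prove, so the entire content is that the minimum distance $d^\perp$ of $\C^\perp$ equals $4$. The device I would use is that $d^\perp$ is the smallest index with nonzero coefficient in the weight enumerator of $\C^\perp$, and that, by the MacWilliams identity, this weight enumerator is completely determined by the weight enumerator of $\C$ and the number $|\C|$. By Lemma \ref{lem-ovoidcodewtlem} the weight enumerator of $\C$ is the one displayed in (\ref{eqn-wtdistovidcode}); substituting $z=1$ there gives $|\C|=q^4$. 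Hence $d^\perp$ takes the same value for \emph{every} $[q^2+1,4,q^2-q]$ code over $\gf(q)$, and in particular $d^\perp=d^\perp(\C_{\cV})$, where $\C_{\cV}$ is the code constructed in Section \ref{sec-ovoidcodes} from a classical ovoid $\cV$ (an elliptic quadric), which exists in $\PG(3,\gf(q))$ for every prime power $q$.

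It then remains to compute $d^\perp(\C_{\cV})$, which I would do geometrically. View the columns of $G_{\cV}$ as representative vectors $\bv_1,\dots,\bv_{q^2+1}$ of the points of $\cV$. Every set of at most three of these columns is linearly independent: none of the $\bv_i$ is zero; no two are proportional, since the $q^2+1$ points of $\cV$ are pairwise distinct in $\PG(3,\gf(q))$; and three linearly dependent ones would span a projective line through three distinct points of $\cV$, contradicting that no three points of an ovoid are collinear. Hence $d^\perp(\C_{\cV})\ge 4$. For the reverse inequality I would invoke Singleton: for $q>2$ the code $\C$ is not MDS, since an MDS code with these parameters would have minimum distance $q^2-2\ne q^2-q$, so $\C^\perp$ is not MDS either and $d^\perp\le (q^2+1)-(q^2-3)=4$; equivalently, since $\cV$ meets every plane in $1$ or $q+1$ points and admits a secant plane (e.g.\ the plane spanned by any three non-collinear points of $\cV$), that secant plane contains $q+1\ge 4$ points of $\cV$, any four of which are vectors in a $3$-dimensional subspace and hence linearly dependent. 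Either way $d^\perp=d^\perp(\C_{\cV})=4$, so $\C^\perp$ is a $[q^2+1,q^2-3,4]$ code.

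I do not anticipate a real obstacle here; the one step to phrase with care is the reduction in the first paragraph — that the weight enumerator of $\C$ (fixed by Lemma \ref{lem-ovoidcodewtlem}) together with $|\C|=q^4$ determines the weight enumerator of $\C^\perp$, hence $d^\perp$ — since this is precisely what lets the computation be transported to the explicit ovoid code, where the minimum distance of the dual is visible by inspection. For readers who prefer to avoid any mention of ovoid codes, an alternative is to apply the MacWilliams transform directly to (\ref{eqn-wtdistovidcode}) and check that the first three coefficients of the resulting dual weight enumerator vanish while the fourth is positive; there the only mildly delicate point is verifying that the coefficient $A_4^\perp$ does not vanish identically in $q$ for $q>2$, a check the geometric route sidesteps entirely.
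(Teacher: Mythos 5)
Your proposal is correct, but it proves the statement by a different route than the paper intends. The paper offers no written proof beyond ``follows from Lemma \ref{lem-ovoidcodewtlem}'': the intended argument is that, once the weight enumerator of $\C$ is pinned down by Lemma \ref{lem-ovoidcodewtlem}, the MacWilliams transform determines the weight enumerator of $\C^\perp$ outright, and inspecting its low-order coefficients (this computation is what Theorem \ref{thm-iccodedual} records in full) shows $A_1^\perp=A_2^\perp=A_3^\perp=0$ and $A_4^\perp>0$. You use the same reduction step --- weight enumerator of $\C$ plus $|\C|=q^4$ determines $d^\perp$, so $d^\perp$ is a constant over all $[q^2+1,4,q^2-q]$ codes --- but then you transport the computation to the concrete elliptic-quadric code $\C_{\cV}$ of Section \ref{sec-ovoidcodes} and read off $d^\perp=4$ geometrically (any three columns independent since no three ovoid points are collinear; a secant plane carries $q+1\ge 4$ points, giving a dependent quadruple, or alternatively the non-MDS/Singleton argument). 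What your route buys is that it sidesteps the Krawtchouk-coefficient bookkeeping entirely, and in particular the verification that $A_4^\perp$ is genuinely positive; what it costs is the extra inputs that an elliptic quadric exists in $\PG(3,\gf(q))$ for every $q>2$ and that Section \ref{sec-ovoidcodes} really establishes $\C_{\cV}$ as a $[q^2+1,4,q^2-q]$ code (in particular that the ovoid spans, so the dimension is $4$), whereas the paper's direct MacWilliams computation is self-contained and yields the whole dual weight distribution as a by-product. Both arguments are sound; just be sure, if you keep the transport version, to cite the existence of elliptic quadrics for all $q>2$ explicitly and to note that the weight-exactly-$4$ dual codeword comes from the dependency on a secant plane combined with the lower bound $d^\perp\ge 4$.
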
 

The following conclusion then follows from Theorem \ref{thm-ovoidcodespel}. 

\begin{theorem}\label{thm-codeovoid}
Let $\C$ be a $[q^2+1, 4, q^2-q]$ code over $\gf(q)$. Let $G$ be a generator matrix 
of $\C$. Then the column vectors of $G$ form an ovoid in $\PG(3, \gf(q))$.   
\end{theorem}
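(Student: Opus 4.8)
The plan is to run the argument that built the code $\C_{\cV}$ from an ovoid $\cV$ in reverse. Let $\C$ be a $[q^2+1,4,q^2-q]$ code over $\gf(q)$ with generator matrix $G$, and write $\bv_1,\dots,\bv_{q^2+1}$ for its columns, viewed as vectors in $\gf(q)^4$. First I would record two structural facts. (i) No column is the zero vector and no two columns are scalar multiples of one another; otherwise one could produce a nonzero codeword of weight at most $q^2$ with a coordinate forced to be handled by a linear functional vanishing on an entire coordinate, contradicting either the minimum distance or, more carefully, the weight enumerator of Lemma~\ref{lem-ovoidcodewtlem}, which shows every nonzero weight is $q^2-q$ or $q^2$ and hence every coordinate of $G$ is ``used'' projectively. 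So the $\bv_i$ determine $q^2+1$ pairwise distinct points of $\PG(3,\gf(q))$. (ii) Since $\C$ has dimension $4$, $G$ has rank $4$, so these points are not all contained in a plane.

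The heart of the matter is to show no three of the points $[\bv_i]$ are collinear. I would argue by contraposition through the hyperplane/codeword dictionary. A codeword of $\C$ is $\ba G = (\inner{\ba}{\bv_1},\dots,\inner{\ba}{\bv_{q^2+1}})$ for $\ba \in \gf(q)^4$, and its weight is $q^2+1$ minus the number of columns lying on the hyperplane $H_\ba = \{\bx : \inner{\ba}{\bx}=0\}$. By the weight enumerator \eqref{eqn-wtdistovidcode}, for every nonzero $\ba$ the hyperplane $H_\ba$ contains either exactly $q+1$ of the points (weight $q^2-q$) or exactly $1$ of the points (weight $q^2$); no plane contains $0$ points and, crucially, no plane contains $2$ points or any number strictly between $2$ and $q+1$. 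Now suppose three of the points, say $[\bv_1],[\bv_2],[\bv_3]$, were collinear, lying on a line $\ell$. Every line in $\PG(3,q)$ is the intersection of planes through it; since the three points are distinct and collinear, any plane containing two of them contains $\ell$ and hence all three. Counting planes: the number of points among the $\bv_i$ on such a plane is $\geq 3$, so it must be $q+1$, and this holds for every one of the $q+1$ planes through $\ell$. A double count of incidences (pairs (point on $\ell'$, plane through $\ell$) versus the $q+1$ planes each carrying $\geq 3$ and $\leq q+1$ of the points, with the three points on $\ell$ counted in all of them) then forces too many points: one gets at least $3 + (q+1)(q+1-3) = q^2 - q + 1 > q^2 + 1$ is not quite the contradiction, so instead I would count more carefully the points off $\ell$, showing the total exceeds $q^2+1$; alternatively, and more cleanly, I would use that the two collinear points $[\bv_1],[\bv_2]$ already force, via any hyperplane through them, a block of size $\geq 3$, while a hyperplane meeting $\ell$ in only those... — the cleanest route is: a hyperplane containing $[\bv_1]$ but not $[\bv_2]$ exists (since the points are distinct and $q+1 \geq 3$, pick $\ba$ with $\inner{\ba}{\bv_1}=0 \neq \inner{\ba}{\bv_2}$), and then that hyperplane meets the point set in $\geq 1$ point, so in exactly $1$; doing this for a hyperplane through $[\bv_1]$ and $[\bv_2]$ but not $[\bv_3]$ — impossible, since collinearity forces $[\bv_3]$ onto any plane through $[\bv_1],[\bv_2]$ unless that plane misses $\ell$, which it cannot if it contains two points of $\ell$ — pins the size of that block to $q+1$, and iterating over all such planes over-counts.

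To make the count rigorous I would fix the line $\ell$ through the three collinear points and partition the $q^2+1$ points as those on $\ell$ (at least $3$, at most $q+1$) and those off $\ell$. Each of the $q+1$ planes through $\ell$ meets the point set in exactly $q+1$ points (it contains $\geq 3$, so by the weight enumerator exactly $q+1$), hence each carries exactly $q+1 - |\ell \cap \cV|$ points off $\ell$, and these off-$\ell$ parts are disjoint across the $q+1$ planes. Thus $|\cV| = |\ell\cap\cV| + (q+1)(q+1-|\ell\cap\cV|)$. Writing $t = |\ell\cap\cV|$ with $3 \leq t \leq q+1$, this gives $q^2+1 = t + (q+1)(q+1-t) = (q+1)^2 - qt$, i.e. $qt = (q+1)^2 - (q^2+1) = 2q$, so $t = 2$, contradicting $t \geq 3$. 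Hence no three columns are collinear, and together with facts (i) and (ii) the $q^2+1$ points $[\bv_i]$ form a $(q^2+1)$-cap not contained in a plane, i.e. an ovoid in $\PG(3,\gf(q))$. The main obstacle I anticipate is being scrupulous about fact (i) — ruling out repeated or proportional columns — since the bare parameters $[q^2+1,4,q^2-q]$ do not literally say the code is projective; one must invoke Lemma~\ref{lem-ovoidcodewtlem} to see that a repeated column would create a coordinate on which no minimum-weight codeword is supported in the required pattern, forcing a nonzero weight outside $\{q^2-q,q^2\}$.
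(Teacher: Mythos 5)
Your core geometric argument is correct, and it is a genuinely different route from the paper's. The paper proves this theorem in essentially one step: by Theorem~\ref{thm-ovoidcodespel} the dual code is a $[q^2+1,q^2-3,4]$ code, and since a generator matrix $G$ of $\C$ is a parity-check matrix of $\C^\perp$, the minimum distance of $\C^\perp$ equals the smallest number of linearly dependent columns of $G$; dual distance $4$ therefore says that \emph{any three columns of $G$ are linearly independent}, which simultaneously gives that no column is zero, no two are proportional, and no three of the resulting distinct points are collinear, i.e.\ the columns form a $(q^2+1)$-cap. Your proof instead works directly from the weight enumerator of Lemma~\ref{lem-ovoidcodewtlem}: every plane meets the column set in $1$ or $q+1$ points (weights $q^2$ and $q^2-q$, and no weight $q^2+1$ rules out empty planes), and partitioning the point set by the $q+1$ planes through a hypothetical $3$-secant line $\ell$ gives $q^2+1=t+(q+1)(q+1-t)$, forcing $t=2$, a contradiction. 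That final count is correct and even exposes structure the paper's argument hides (every secant plane carries exactly $q+1$ points, the inversive-plane blocks of Theorem~\ref{thm-steinerplane}); the exploratory middle paragraph with the abandoned $3+(q+1)(q-2)$ estimate contributes nothing and should be deleted.

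The genuine gap is your fact (i). The justification offered --- that a zero or repeated column would leave a coordinate not ``used projectively'' and so force a weight outside $\{q^2-q,q^2\}$ --- is an assertion, not an argument, and you flag it yourself; yet your final count does assume the columns give $q^2+1$ distinct projective points. Two honest repairs: (a) simply invoke Theorem~\ref{thm-ovoidcodespel} as above, which settles (i) (and, indeed, the whole theorem) via dual distance $4$; or (b) stay within your counting framework: the first Pless power moment gives total weight $(q^2+1)(q-1)q^3$, which forces every column to be nonzero, and if two columns represented the same point $P$, then each of the $q^2+q+1$ planes through $P$ would contain at least two, hence exactly $q+1$, columns counted with multiplicity, so the remaining $q^2-1$ columns would produce at least $(q^2-1)(q+1)=(q-1)(q^2+2q+1)$ incidences with these planes, exceeding the available $(q^2+q+1)(q-1)$ --- a contradiction. (Equivalently, run your line count on columns with multiplicity; it excludes proportional columns verbatim.) With (i) secured by either route, your proof is complete.
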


Due to Theorems \ref{thm-codeovoid} and   
\ref{thm-ovoidcodespel}, linear codes over $\gf(q)$ with parameters $[q^2+1, 4, q^2-q]$ 
are called \emph{ovoid codes}\index{ovoid code}. Another special feature of ovoid codes 
is that they meet the Griesmer bound. 
Linear codes over $\gf(q)$ with parameters $[q^2+1, q^2-3, 4]$ are almost-MDS codes. Note that 
the weight distribution of general almost-MDS codes is not known, though that of MDS codes 
is determined.

\section{A Family of Cyclic Codes with Parameters $[q^2+1, 4, q^2-q]$} 

The objective of this section is to present a family of irreducible cyclic 
codes over $\gf(q)$ with parameters $[q^2+1, 4, q^2-q]$, which give a family 
of ovoids in $\PG(3, \gf(q))$ by Theorem \ref{thm-codeovoid}.     

Let $r$ be a power of $q$ and $q$ be a power of a prime $p$. 
Let $N>1$ be an integer dividing $r-1$, and put $n=(r-1)/N$.
Let $\alpha$ be a primitive element of $\gf(r)$ and let $\theta=\alpha^N$.
The set
\begin{eqnarray}\label{eqn-irrecode2}
\C(r,N) =
 \{(\tr_{r/q}(\beta), \tr_{r/q}(\beta \theta), ...,
              \tr_{r/q}(\beta \theta^{n-1})) :
    \beta \in \gf(r)\}
\end{eqnarray}
is called an {\em irreducible cyclic $[n, m_0]$ code} over $\gf(q)$,
where $\tr_{r/q}$ is the trace function from $\gf(r)$ onto $\gf(q)$,
$m_0$ is the multiplicative order of $q$ modulo $n$ and
$m_0$ divides $m$.

Let $\zeta_p=e^{2\pi \sqrt{-1}/p}$, and $\chi(x)=\zeta_p^{\tr_{r/p}(x)}$, where $\tr_{r/p}$ is
the trace function from $\gf(r)$ to $\gf(p)$. Then $\chi$ is an
additive character of $\gf(r)$. 
Let $\alpha$ be a fixed primitive element of $\gf(r)$.
Define $C_{i}^{(N,r)}=\alpha^i \langle \alpha^{N} \rangle$ for $i=0,1,...,N-1$, where
$\langle \alpha^{N} \rangle$ denotes the
subgroup of $\gf(r)^*$ generated by $\alpha^{N}$. The cosets $C_{i}^{(N,r)}$ are
called the {\em cyclotomic classes} of order $N$ in $\gf(r)$. 
The {\em Gaussian periods} are defined by
$$
\eta_i^{(N,r)} =\sum_{x \in C_i^{(N,r)}} \chi(x), \quad i=0,1,..., N-1,
$$
where $\chi$ is the canonical additive character of $\gf(r)$.  

To determine the weight distribution of some irreducible cyclic codes later, we need the
following lemma \cite{DY13}.

\begin{lemma}\label{lem-haw}
Let $e_1$ be a positive divisor of $r-1$
and let $i$ be any integer with $0 \le i <e_1$.
We have the following multiset equality:
\begin{eqnarray*}\label{eqn-fgfg0}
\begin{array}{rl}
\left\{\left\{xy: y \in \gf(q)^*, \ x \in C_i^{(e_1,r)}\right\}\right\}  
= \frac{(q-1)\gcd((r-1)/(q-1), e_1)}{e_1}*C_i^{(\gcd((r-1)/(q-1),e_1),r)},
\end{array} \nonumber \\ 
\end{eqnarray*}
where $\frac{(q-1)\gcd((r-1)/(q-1), e_1)}{e_1}*C_i^{(\gcd((r-1)/(q-1),e_1),r)}$ denotes the multiset
in which each element in
the set $C_i^{(\gcd((r-1)/(q-1),e_1),r)}$ appears in the multiset with multiplicity exactly $\frac{(q-1)\gcd((r-1)/(q-1), e_1)}{e_1}$.
\end{lemma}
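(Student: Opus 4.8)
The plan is to rewrite the multiset identity as a purely group-theoretic statement inside the cyclic group $\gf(r)^*$ and then count. Put $N'=(r-1)/(q-1)$; since $r$ is a power of $q$ we have $q-1 \mid r-1$, so $N'$ is a positive integer, and $\gf(q)^*$ is exactly the unique subgroup of $\gf(r)^*$ of order $q-1$, namely $\gf(q)^*=\langle \alpha^{N'}\rangle$. Set $H_1=\langle \alpha^{e_1}\rangle$ (so that $C_i^{(e_1,r)}=\alpha^i H_1$) and $H_2=\gf(q)^*=\langle \alpha^{N'}\rangle$. Then the left-hand multiset is $\{\{xy : x\in \alpha^i H_1,\ y\in H_2\}\}$, which contains $|H_1|\cdot|H_2|=(r-1)(q-1)/e_1$ elements counted with multiplicity.

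First I would pin down the support. Since $H_1H_2=\langle \alpha^{e_1},\alpha^{N'}\rangle=\langle \alpha^{\gcd(e_1,N')}\rangle$, the underlying set of the product multiset is $\alpha^i H_1H_2=\alpha^i\langle \alpha^{\gcd(e_1,N')}\rangle=C_i^{(\gcd(e_1,N'),r)}$, which is precisely the set appearing on the right-hand side. Next I would show every element of this support occurs with the same multiplicity. For a fixed $z\in \alpha^i H_1H_2$, a pair $(x,y)\in(\alpha^i H_1)\times H_2$ satisfies $xy=z$ iff $x\in\alpha^i H_1$ and $zx^{-1}\in H_2$, i.e. iff $x\in(\alpha^i H_1)\cap(zH_2)$ (using $H_2^{-1}=H_2$); so the multiplicity of $z$ equals $|(\alpha^i H_1)\cap(zH_2)|$. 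Two cosets of subgroups of the abelian group $\gf(r)^*$ are either disjoint or intersect in a full coset of $H_1\cap H_2$, and they meet exactly when $z\in \alpha^i H_1H_2$. Hence on the support the multiplicity is the constant
\[
|H_1\cap H_2| \;=\; |\langle \alpha^{\lcm(e_1,N')}\rangle| \;=\; \frac{r-1}{\lcm(e_1,N')}.
\]

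Finally I would reconcile the arithmetic: using $\lcm(e_1,N')=e_1N'/\gcd(e_1,N')$ and $N'=(r-1)/(q-1)$,
\[
\frac{r-1}{\lcm(e_1,N')} \;=\; \frac{(r-1)\gcd(e_1,N')}{e_1N'} \;=\; \frac{(q-1)\gcd(e_1,N')}{e_1} \;=\; \frac{(q-1)\gcd((r-1)/(q-1),e_1)}{e_1},
\]
which is exactly the claimed multiplicity; multiplying it by $|C_i^{(\gcd(e_1,N'),r)}|=(r-1)/\gcd(e_1,N')$ recovers the total count $(q-1)(r-1)/e_1$ as a consistency check.

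Honestly there is no serious obstacle here: the argument is a short exercise with indices of subgroups of a cyclic group. The one point that genuinely needs care is the uniformity of the multiplicity — i.e. the fact that a nonempty intersection of a coset of $H_1$ with a coset of $H_2$ is a full coset of $H_1\cap H_2$, so its size is independent of $z$; everything else is bookkeeping with $\gcd$ and $\lcm$.
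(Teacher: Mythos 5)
Your argument is correct, and it is organized differently from the paper's proof, though both ultimately rest on the same cyclic-group arithmetic. The paper first reduces to the case $i=0$, writes $y=\alpha^{\frac{r-1}{q-1}\ell}$ and $x=\alpha^{e_1 j}$ explicitly, uses the coprimality of the reduced exponents to count that $xy=1$ occurs exactly $\frac{(q-1)\gcd((r-1)/(q-1),e_1)}{e_1}$ times, and then transfers this count to every other product via the observation that $x_1y_1=x_2y_2$ iff $(x_1/x_2)(y_1/y_2)=1$; the identification of the support is left largely implicit in the exponent formula. You instead work coordinate-free: you identify $\gf(q)^*=\langle\alpha^{(r-1)/(q-1)}\rangle$, observe that the support is the coset $\alpha^i H_1H_2$ of $\langle\alpha^{\gcd(e_1,(r-1)/(q-1))}\rangle$, and compute the (automatically uniform) multiplicity of each element as $\left|(\alpha^i H_1)\cap(zH_2)\right|=|H_1\cap H_2|=(r-1)/\lcm\bigl(e_1,(r-1)/(q-1)\bigr)$, which the $\gcd$--$\lcm$ identity converts into the stated multiplicity; no reduction to $i=0$ and no separate uniformity step is needed. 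What your route buys is that the two things the paper handles somewhat implicitly --- that the underlying set is exactly $C_i^{(\gcd((r-1)/(q-1),e_1),r)}$ and that the multiplicity is the same for every element of it --- fall out at once from the standard fact that a nonempty intersection of cosets of $H_1$ and $H_2$ in an abelian group is a full coset of $H_1\cap H_2$; the paper's version, on the other hand, is more self-contained in that it only manipulates exponents and never invokes that coset fact. Your final total-count consistency check is a nice touch but not logically necessary.
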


\begin{proof}
We just prove the conclusion for $i=0$. The proof is similar for $i\neq0$ since
$$C_i^{(\gcd((r-1)/(q-1),e_1),r)}=\alpha^i C_0^{(\gcd((r-1)/(q-1),e_1),r)}.$$
Note that every $y \in \gf(q)^*$ can be expressed as $y=\alpha^{\frac{r-1}{q-1}\ell}$
for an unique $\ell$ with $0 \le \ell < q-1$ and every $x \in C_0^{(e_1,r)}$ can be expressed as
$x=\alpha^{e_1j}$ for an unique $j$ with $0 \le j < (r-1)/e_1$. Then
we have
$$
xy=\alpha^{\frac{r-1}{q-1}\ell + e_1j}.
$$
It follows that
$$
xy=\alpha^{\frac{r-1}{q-1}\ell + e_1j}=
(\alpha^{\gcd((r-1)/(q-1),e_1)})^{ \frac{r-1}{(q-1)\gcd((r-1)/(q-1),e_1)}\ell + \frac{e_1}{\gcd((r-1)/(q-1),e_1)}j}.
$$
Note that
$$
\gcd\left( \frac{r-1}{(q-1)\gcd((r-1)/(q-1),e_1)}, \frac{e_1}{\gcd((r-1)/(q-1),e_1)}\right)=1.
$$
When $\ell$ ranges over $0 \le \ell < q-1$ and $j$ ranges over $0 \le j < (r-1)/e_1$,
$xy$ takes on the value $1$ exactly $\frac{q-1}{e_1}\gcd((r-1)/(q-1), e_1)$ times.

Let $x_{i_1} \in C_0^{(e_1,r)}$ for $i_1=1$ and $i_1=2$, and let $y_{i_2} \in \gf(q)^*$ for $i_2=1$ and $i_2=2$.
Then $\frac{x_1}{x_2} \in C_0^{(e_1,r)}$ and $\frac{y_1}{y_2} \in \gf(q)^*$. Note that
$x_1y_1=x_2y_2$ if and only if $\frac{x_1}{x_2} \frac{y_1}{y_2} =1$. Then the
conclusion of the lemma for the case $i=0$ follows from the discussions above.
\end{proof}

Let $N>1$ be an integer dividing $r-1$, and put $n=(r-1)/N$.
Let $\alpha$ be a primitive element of $\gf(r)$ and let $\theta=\alpha^N$.
Let $Z(r,a)$ denote the number of solutions $x \in \gf(r)$ of the equation $\tr_{r/q}(ax^{N})=0$.
We have then by Lemma \ref{lem-haw}
\begin{eqnarray*}\label{eqn-forall1}
Z(r,a) 
&=& \frac{1}{q} \sum_{y \in \gf(q)} \sum_{x \in \gf(r)} \zeta_p^{\tr_{q/p}(y \tr_{r/q} (ax^{N}))} \nonumber \\
&=& \frac{1}{q} \sum_{y \in \gf(q)} \sum_{x \in \gf(r)} \chi(yax^{N}) \nonumber \\
& =& \frac{1}{q} \left[ q + r-1 + N \sum_{y \in \gf(q)^*} \sum_{x \in C_{0}^{(N,r)}} \chi(ya x) \right] \nonumber \\
& =& \frac{1}{q} \left[ q + r-1 + (q-1)\gcd(\frac{r-1}{q-1}, N)\cdot \sum_{z \in C_{0}^{\left(\gcd\left(\frac{r-1}{q-1},N\right),r\right)}} \chi(az) \right]. 
\end{eqnarray*}

Then the Hamming weight of the codeword 
\begin{eqnarray}\label{eqn-mycword}
\bc(\beta)=(\tr_{r/q}(\beta), \tr_{r/q}(\beta \theta), ...,
              \tr_{r/q}(\beta \theta^{n-1}))
\end{eqnarray} 
in the irreducible cyclic code of (\ref{eqn-irrecode2}) is equal to
\begin{eqnarray}\label{eqn-wtmain}
n-\frac{Z(r,\beta)-1}{N}=
\frac{(q-1)\left(r-1-\gcd\left(\frac{r-1}{q-1}, N\right) \eta_{k}^{ \left(\gcd\left(\frac{r-1}{q-1},N\right),r\right)} \right)}{qN}.
\end{eqnarray}

Below we present a family of two-weight cyclic codes which are in fact ovoid codes. 

\begin{theorem}\label{thm-iccode}
Let $q=2^s$, where $s \geq 2$. Let $m=4$ and $N=q^2-1$. Then the code  $\C(r, N)$ over $\gf(q)$ of \eqref{eqn-irrecode2} 
has parameters $[q^2+1, 4, q^2-q]$ and weight enumerator 
\begin{eqnarray}
1+(q^2-q)(q^2+1)z^{q^2-q} + (q-1)(q^2+1)z^{q^2}.  
\end{eqnarray}
\end{theorem}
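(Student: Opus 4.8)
The plan is to read the code parameters off the general weight formula \eqref{eqn-wtmain}. Put $r=q^{m}=q^{4}$; then the length is $n=(r-1)/N=(q^{4}-1)/(q^{2}-1)=q^{2}+1$, and since $q^{2}\equiv-1\pmod{q^{2}+1}$ we have $\ord_{q^{2}+1}(q)=4=m$, so \eqref{eqn-irrecode2} is a genuine irreducible cyclic code of length $q^{2}+1$ over $\gf(q)$. Using $(r-1)/(q-1)=(q^{4}-1)/(q-1)=(q+1)(q^{2}+1)$, $N=q^{2}-1=(q-1)(q+1)$, and the fact that $q-1$ is odd ($q$ being a power of $2$), one gets $e_{1}:=\gcd\bigl((r-1)/(q-1),\,N\bigr)=(q+1)\gcd(q^{2}+1,\,q-1)=(q+1)\gcd(2,\,q-1)=q+1$. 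Substituting $e_{1}=q+1$ into \eqref{eqn-wtmain} and cancelling $q^{2}-1$ turns the weight of the codeword $\bc(\beta)$, for $\beta\in C_{k}^{(q+1,\,q^{4})}$, into
\[
\wt(\bc(\beta))=\frac{(q-1)(q^{2}+1)-\eta_{k}^{(q+1,\,q^{4})}}{q},
\]
while $\bc(0)=\bzero$. Hence everything reduces to evaluating the $q+1$ Gaussian periods $\eta_{k}^{(q+1,q^{4})}$, $0\le k\le q$.

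This is a semiprimitive (uniform cyclotomy) instance: $2^{s}=q\equiv-1\pmod{q+1}$, and $s$ is the least such exponent (for $j<s$, $0<2^{s}-2^{j}<q+1$ forces $2^{j}\not\equiv-1$). I would invoke the classical evaluation of Gauss sums in the semiprimitive case: for every divisor $d>1$ of $q+1$ and every multiplicative character $\psi$ of $\gf(q^{4})$ of order $d$, $G(\psi)=-q^{2}=-\sqrt{r}$. Getting this sign is the one delicate point. Writing $2j_{d}=\ord_{d}(2)$ and $r=2^{4s}=2^{2j_{d}t_{d}}$, from $2^{s}\equiv-1\pmod d$ one obtains $\ord_{d}(2)\mid 2s$, hence $j_{d}\mid s$, hence $t_{d}=2s/j_{d}$ is even; and since $d$ is odd there is no order-$2$ character and no $p=2$ parity exception, so the semiprimitive formula gives $G(\psi)=(-1)^{t_{d}-1}2^{j_{d}t_{d}}=-2^{2s}=-q^{2}$.

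Expanding each Gaussian period in Gauss sums, $\eta_{k}^{(q+1,q^{4})}=\tfrac{1}{q+1}\sum_{\psi^{q+1}=1}\psi^{-1}(\alpha^{k})\,G(\psi)$, and peeling off the trivial character (for which $G=-1$), I would get $\eta_{0}^{(q+1,q^{4})}=\tfrac{-1-q^{3}}{q+1}=-(q^{2}-q+1)$ and $\eta_{k}^{(q+1,q^{4})}=\tfrac{q^{2}-1}{q+1}=q-1$ for $1\le k\le q$. Substituting back into the weight formula above: $\beta\in C_{0}^{(q+1,q^{4})}$ gives $\wt(\bc(\beta))=\bigl((q-1)(q^{2}+1)+q^{2}-q+1\bigr)/q=q^{2}$, each of the remaining $q$ classes gives $\bigl((q-1)(q^{2}+1)-(q-1)\bigr)/q=q^{2}-q$, and $\beta=0$ gives weight $0$.

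Since every cyclotomic class of order $q+1$ in $\gf(q^{4})$ has $(q^{4}-1)/(q+1)=(q-1)(q^{2}+1)$ elements, counting multiplicities produces the weight enumerator $1+(q^{2}-q)(q^{2}+1)z^{q^{2}-q}+(q-1)(q^{2}+1)z^{q^{2}}$. In particular the minimum distance is $q^{2}-q$, and because $\bzero$ is the unique codeword of weight $0$, the map $\beta\mapsto\bc(\beta)$ is injective, so $\C(r,N)$ has $\gf(q)$-dimension $4$; thus its parameters are $[q^{2}+1,\,4,\,q^{2}-q]$. I expect the main obstacle to be exactly the sign in the semiprimitive Gauss-sum evaluation — confirming $G(\psi)=-\sqrt r$ rather than $+\sqrt r$ — since the entire weight distribution, and hence (via Theorem \ref{thm-codeovoid}) the ovoid property, hinges on it; the parity bookkeeping above, $j_{d}\mid s$ together with $q+1$ odd, is what secures it.
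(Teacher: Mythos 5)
Your proposal is correct and follows essentially the same route as the paper: compute $\gcd((r-1)/(q-1),N)=q+1$, evaluate the order-$(q+1)$ Gaussian periods of $\gf(q^4)$ in the semiprimitive (uniform cyclotomy) case, namely $\eta_0=-(q^2-q+1)$ and $\eta_k=q-1$ for $1\le k\le q$, and substitute into the weight formula \eqref{eqn-wtmain}. The only differences are minor: the paper cites Baumert--Mills--Ward for the period values and obtains the frequencies from the first two Pless power moments, whereas you rederive the periods from the semiprimitive Gauss-sum evaluation (your sign bookkeeping, $t_d$ even hence $G(\psi)=-q^2$, is correct) and count frequencies directly from the cyclotomic class sizes together with injectivity of $\beta\mapsto\bc(\beta)$ -- both arguments yield the same weight enumerator.
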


\begin{proof}
Let 
$$ 
N_1=\gcd((r-1)/(q-1), N)=q+1. 
$$
It then follows from \cite{BMW82} that 
$$ 
\eta_0^{(N_1, r)}=-(q^2-q+1), \ \eta_i^{(N_1, r)}=q-1 \mbox{ for } 1 \leq i \leq q. 
$$ 
Let $\beta \in C_i^{(N_1, r)}$, where $0 \leq i \leq q$. By \eqref{eqn-wtmain}, the Hamming weight of the codeword 
$\bc(\beta)$ in \eqref{eqn-mycword} is given by 
\begin{eqnarray*}
\wt(\bc(\beta)) 
&=& \frac{(q-1)\left(r-1-\gcd\left(\frac{r-1}{q-1}, N\right) \eta_{k}^{ \left(\gcd\left(\frac{r-1}{q-1},N\right),r\right)} \right)}{qN} \\ 
&=& \frac{(q-1)\left(r-1-(q+1) \eta_{k}^{ \left(N_1,r\right)} \right)}{qN} \\ 
&=& \left\{ 
\begin{array}{ll}
q^2 & \mbox{ if } i=0, \\
q^2-q & \mbox{ if } 1 \leq i \leq q. 
\end{array}
\right. 
\end{eqnarray*} 
It is obvious that the dual code of  $\C(r, N)$ has minimum distance at least $2$. Let $w_1=q^2-q$ and $w_2=q^2$. 
Let $A_{w_1}$ and $A_{w_2}$ denote the number of codewords with weight $w_1$ and $w_2$ in $\C(r, N)$, respectively. 
The first two Pless power moments then become 
\begin{eqnarray*}
A_{w_1}+A_{w_2}=q^4-1 \mbox{ and } w_1A_{w_1}+w_2A_{w_2}=q^3(q-1)(q^2+1). 
\end{eqnarray*} 
Solving this set of equations above yields 
$$ 
A_{w_1}=(q^2-q)(q^2+1), \ A_{w_2}=(q-1)(q^2+1). 
$$ 
This completes the proof. 
\end{proof} 

The following problem is open. 

\begin{open} 
Are the ovoids given by $\C(r,N)$ equivalent to the elliptic quadratics or the Tits ovoids?  
\end{open}

\section{Concluding remarks} 

Let $\C$ be a $[q^2+1, 4, q^2-q]$ code over $\gf(q)$, i.e., an ovoid code. 
The weight distribution of $\C^\perp$ is given below. 

\begin{theorem}\label{thm-iccodedual}
Let $q \geq 4$, and let $\C$ be a $[q^2+1, 4, q^2-q]$ code over $\gf(q)$. 
Then the weight distribution of $\C^\perp$ is given by 
\begin{eqnarray}
q^4 A^\perp_\ell &=& \binom{q^2+1}{\ell}(q-1)^\ell + 
     u \sum_{i+j=\ell} \binom{q^2-q}{i}(-1)^i \binom{q+1}{j} (q-1)^j  + \nonumber \\
   && v\left[ (-1)^\ell \binom{q^2}{\ell} +(-1)^{\ell-1} (q-1) \binom{q^2}{\ell-1} \right]
\end{eqnarray}
for all $4 \leq \ell \leq q^2$, and 
$$ 
q^4 A^\perp_{q^2+1}=(q-1)^{q^2+1}+u(q-1)^{q+1}+v(q-1), 
$$ 
where 
\begin{eqnarray}
u=(q^2-q)(q^2+1), \ v=(q-1)(q^2+1) 
\end{eqnarray} 
and $A^\perp_\ell$ denotes the number of codewords of weight $\ell$ in $\C^\perp$. 
\end{theorem}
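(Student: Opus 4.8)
The plan is to compute the complete weight distribution of $\C^\perp$ directly from that of $\C$, which is completely determined by Lemma~\ref{lem-ovoidcodewtlem}, using MacWilliams-type identities. The cleanest route is to work with the (homogeneous) MacWilliams identity over $\gf(q)$: if $W_\C(x,y) = \sum_{i} A_i\, x^{n-i} y^i$ is the weight enumerator of $\C$ with $n = q^2+1$, then
\begin{eqnarray}\label{eqn-mwplan}
W_{\C^\perp}(x,y) = \frac{1}{|\C|}\, W_\C\bigl(x+(q-1)y,\ x-y\bigr),
\end{eqnarray}
with $|\C| = q^4$, since $\C$ is $4$-dimensional. From \eqref{eqn-wtdistovidcode} we have
$$
W_\C(x,y) = x^{q^2+1} + u\, x^{q+1} y^{q^2-q} + v\, x\, y^{q^2},
$$
where $u=(q^2-q)(q^2+1)$ and $v=(q-1)(q^2+1)$ are exactly the constants named in the theorem. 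So the entire task reduces to substituting $x\mapsto x+(q-1)y$ and $y\mapsto x-y$ into these three monomials, expanding, dividing by $q^4$, and reading off the coefficient of $x^{q^2+1-\ell}y^\ell$.

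The three contributions are handled separately. First, $\bigl(x+(q-1)y\bigr)^{q^2+1}$ expands by the binomial theorem to $\sum_\ell \binom{q^2+1}{\ell}(q-1)^\ell x^{q^2+1-\ell}y^\ell$, giving the first term $\binom{q^2+1}{\ell}(q-1)^\ell$. Second, $\bigl(x+(q-1)y\bigr)^{q+1}\bigl(x-y\bigr)^{q^2-q}$: expand each factor binomially, $\bigl(x+(q-1)y\bigr)^{q+1} = \sum_j \binom{q+1}{j}(q-1)^j x^{q+1-j}y^j$ and $\bigl(x-y\bigr)^{q^2-q} = \sum_i \binom{q^2-q}{i}(-1)^i x^{q^2-q-i}y^i$; multiplying and collecting the total $y$-degree $\ell = i+j$ yields precisely $u\sum_{i+j=\ell}\binom{q^2-q}{i}(-1)^i\binom{q+1}{j}(q-1)^j$. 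Third, $\bigl(x+(q-1)y\bigr)\bigl(x-y\bigr)^{q^2} = \bigl(x+(q-1)y\bigr)\sum_k\binom{q^2}{k}(-1)^k x^{q^2-k}y^k$; the $x\cdot(\cdot)$ part contributes $(-1)^\ell\binom{q^2}{\ell}$ at $y$-degree $\ell$, while the $(q-1)y\cdot(\cdot)$ part contributes $(q-1)(-1)^{\ell-1}\binom{q^2}{\ell-1}$, giving the bracketed $v$-term. Summing the three and multiplying through by $q^4$ gives exactly the displayed formula for $4\le\ell\le q^2$; the special value at $\ell=q^2+1$ comes from setting $x=1,\,y=1$ in \eqref{eqn-mwplan} (equivalently extracting the top coefficient), where $\bigl(x+(q-1)y\bigr)|_{x=y=1}=q$ raised to the relevant power, yielding $(q-1)^{q^2+1}$, $u(q-1)^{q+1}$, and $v(q-1)$ from the three monomials respectively.

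There is essentially no hard step here — the argument is a bookkeeping exercise in the MacWilliams transform. The only mild subtlety worth flagging is the role of the hypothesis $q\ge 4$: one should check that for $q=4$ the indices in the convolution $\sum_{i+j=\ell}$ and in $\binom{q^2}{\ell-1}$ stay within the valid binomial range for all $4\le\ell\le q^2$ (with the usual convention $\binom{a}{b}=0$ when $b<0$ or $b>a$), and that the weight enumerator \eqref{eqn-wtdistovidcode} of $\C$ is genuinely the one forced by Lemma~\ref{lem-ovoidcodewtlem} (which it is, since any $[q^2+1,4,q^2-q]$ code has that enumerator). One could alternatively derive the same formula via the Pless power moments, but since all of $W_\C$ is known in closed form, the direct MacWilliams substitution above is shorter and self-checking: setting $\ell=0$ must return $q^4 A^\perp_0 = 1+u+v = q^4$, and $\ell=1,2,3$ must return $0$, which provides a quick consistency check on the algebra.
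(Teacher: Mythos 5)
The paper states Theorem~\ref{thm-iccodedual} without proof, so there is no internal argument to compare against; your MacWilliams-transform computation supplies exactly the derivation the paper implicitly relies on, and it is correct. Substituting the two-weight enumerator forced by Lemma~\ref{lem-ovoidcodewtlem} into $W_{\C^\perp}(x,y)=q^{-4}W_\C\bigl(x+(q-1)y,\,x-y\bigr)$ and reading off the coefficient of $x^{q^2+1-\ell}y^\ell$ gives precisely the three displayed terms, and your consistency checks ($\ell=0$ returning $1+u+v=q^4$, and $\ell=1,2,3$ returning $0$, consistent with $d^\perp=4$) are the right sanity tests. Two small points deserve repair. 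First, your justification of the $\ell=q^2+1$ value is garbled: setting $x=y=1$ evaluates the whole enumerator (giving $|\C^\perp|$), not the top coefficient; the correct statement is that $q^4A^\perp_{q^2+1}$ is the coefficient of $y^{q^2+1}$, i.e.\ the evaluation of the transformed polynomial at $x=0$, $y=1$, which yields $(q-1)^{q^2+1}+u\,(-1)^{q^2-q}(q-1)^{q+1}+v\,(-1)^{q^2}(q-1)$. Second, this exposes a parity issue you gloss over: $(-1)^{q^2-q}=1$ always since $q^2-q=q(q-1)$ is even, but $(-1)^{q^2}=1$ only when $q$ is even; for odd $q$ (which the hypothesis $q\ge 4$ does not exclude, and ovoid codes exist for odd $q$ via elliptic quadrics) the last term of the stated formula for $A^\perp_{q^2+1}$ acquires a minus sign. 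So your proof is correct as written for the paper's intended setting $q=2^s$, and the sign observation is really a caveat about the theorem's statement rather than a defect of your method; it would strengthen the write-up to make both points explicit.
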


Any ovoid code $\C$ over $\gf(q)$ holds $3$-designs which are documented below.  

\begin{theorem}\label{thm-steinerplane}
Let $q \geq 4$ and let $\C$ be a $[q^2+1, 4, q^2-q]$ code over $\gf(q)$. Then the supports of the codewords of weight 
$q^2-q$ in $\C$ form a design with parameters 
$$ 
3\mbox{-}(q^2+1, \ q^2-q, \ (q-2)(q^2-q-1)). 
$$ 
The complement of this design is a $3$-$(q^2+1, q+1, 1)$ Steiner system (i.e., 
an inversive plane).  

Furthermore, the supports of all the codewords of weight 
$4$ in $\C^\perp$ form a $3$-$(q^2+1, 4, q-2)$ design. 
\end{theorem}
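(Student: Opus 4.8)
The plan is to exploit the rigidity established in Lemma~\ref{lem-ovoidcodewtlem} together with the Assmus--Mattson theorem. By Theorem~\ref{thm-ovoidcodespel}, $\C^\perp$ is a $[q^2+1, q^2-3, 4]$ code, so its minimum distance is $4$; by Lemma~\ref{lem-ovoidcodewtlem} the code $\C$ has exactly the two nonzero weights $q^2-q$ and $q^2$ given in~\eqref{eqn-wtdistovidcode}. First I would record the number of nonzero weights of $\C$ that lie in the range $\{1,\dots,q^2+1 - \delta^\perp\}$ where $\delta^\perp = 4$: both weights $q^2-q$ and $q^2$ satisfy $w \le q^2+1-4 = q^2-3$ for $q \ge 4$, so there are $s=2$ such weights. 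The Assmus--Mattson theorem with strength $t$ then requires $s \le d - t$, i.e.\ $t \le d - s = (q^2-q) - 2$, which is comfortably $\ge 3$ for $q\ge 4$. Hence for each weight class, the supports of the codewords of that weight in $\C$ form a $3$-design, and likewise the supports of the minimum-weight ($=4$) codewords of $\C^\perp$ form a $3$-design. This immediately yields the qualitative claims; the remaining work is to pin down the parameters.

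Next I would compute the block counts and the index $\lambda_3$ for each design by the standard counting identities $\lambda_t \binom{v}{t} = b \binom{k}{t}$, where $v = q^2+1$, $k$ is the relevant weight, and $b$ is the number of blocks, the latter being the number of codewords of that weight divided by $q-1$ (each support is carried by exactly $q-1$ scalar multiples, since $\C$ is projective and weights are constant on one-dimensional subspaces). From~\eqref{eqn-wtdistovidcode}, there are $(q^2-q)(q^2+1)$ codewords of weight $q^2-q$, hence $b = (q^2-q)(q^2+1)/(q-1) = q(q^2+1)$ blocks of size $k=q^2-q$; solving $\lambda_3 \binom{q^2+1}{3} = q(q^2+1)\binom{q^2-q}{3}$ should give $\lambda_3 = (q-2)(q^2-q-1)$ after simplification. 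For the complement: the complementary blocks have size $(q^2+1)-(q^2-q) = q+1$, and a short computation (or the known fact that an ovoid meets every plane in $1$ or $q+1$ points, the $(q+1)$-secant planes being the tangent-plane sections) shows each triple of points lies in exactly one $(q+1)$-block, giving a $3$-$(q^2+1,q+1,1)$ Steiner system, i.e.\ an inversive (M\"obius) plane of order $q$. For the dual code, $\C^\perp$ has $(q-1)\binom{q^2+1}{4}/\binom{q^2-3}{4}\cdot(\text{correction})$ — more cleanly, one reads the number $A^\perp_4$ of weight-$4$ words off Theorem~\ref{thm-iccodedual} (or computes it directly from the first few Pless power moments of $\C^\perp$), divides by $q-1$ to get $b$, and solves $\lambda_3\binom{q^2+1}{3} = b\binom{4}{3}$; this should return $\lambda_3 = q-2$.

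The one genuine subtlety, and the step I would treat most carefully, is verifying the Assmus--Mattson hypotheses honestly rather than invoking them loosely: one must check that the strength $t$ can be taken to be exactly $3$ and that the $s$-count is done with respect to the correct code ($\C$ for both families of designs, since the theorem produces designs from the supports in $\C$ and in $\C^\perp$ simultaneously under a single hypothesis on $\C$), and that the small cases $q=4$ are not excluded — this is why the theorem is stated with $q \ge 4$. A secondary point is the independent confirmation that the complementary design is a \emph{Steiner} system: Assmus--Mattson alone gives a $3$-design, and one needs $\lambda_3 = 1$, which follows either from the parameter computation above or, more conceptually, from the classical geometry of ovoids in $\PG(3,q)$ — every plane section is a conic ($q+1$ points) or a point, and the $(q+1)$-sections through any two points of the ovoid are in bijection with the points of the line they span that are not on the ovoid. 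I would present the parameter arithmetic compactly and lean on the geometric interpretation to make the Steiner property transparent. The final $3$-$(q^2+1,4,q-2)$ statement for $\C^\perp$ is then just the remaining instance of the same machinery, with no extra obstacle beyond the bookkeeping already set up.
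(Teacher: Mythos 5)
The paper states this theorem without proof, so there is no internal argument to compare against; your Assmus--Mattson route plus the standard counting identities is the natural way to prove it, and your parameter bookkeeping (blocks $= A_w/(q-1)$, $\lambda_3\binom{q^2+1}{3}=b\binom{k}{3}$, complementation to get the inversive plane, $\lambda_3=q-2$ for the weight-$4$ design) is correct and does simplify to the stated values.

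However, the step you yourself single out as the crux --- verifying the Assmus--Mattson hypotheses --- is wrong as written, in two ways. First, the arithmetic: the weight $q^2$ does \emph{not} satisfy $q^2\le q^2+1-4=q^2-3$, so your count $s=2$ is based on a false inequality, and in any case the relevant threshold is $v-t=q^2-2$, not $v-d^\perp$. Second, the structure of the theorem: in Assmus--Mattson the number $s$ of nonzero weights of one code lying in $\{1,\dots,v-t\}$ must be compared with $d-t$ where $d$ is the minimum distance of the \emph{other} (dual) code; you count the weights of $\C$ but compare against $d(\C)-t=q^2-q-2$, which is not a legitimate form of the theorem. With the correct pairing there are only two options: take $\C$ as the primary code, in which case one must count the nonzero weights of $\C^\perp$ up to $q^2-2$ (essentially all of $4,\dots,q^2-2$, by Theorem~\ref{thm-iccodedual}) against $d-t=q^2-q-3$, and the condition fails; or take $\C^\perp$ as the primary code with $t=3$, in which case one counts the nonzero weights of $\C$ up to $v-3=q^2-2$ against $d^\perp-t=4-3=1$. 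The hypothesis then holds precisely because $q^2>q^2-2$, so only the weight $q^2-q$ is counted and $s=1$ --- i.e.\ the observation you dismissed is the only reason the theorem applies at all. Once this verification is repaired (and the routine point is added that codewords of weight $q^2-q$ in $\C$, resp.\ weight $4$ in $\C^\perp$, sharing a support are proportional, which follows from the ovoid geometry of Theorem~\ref{thm-codeovoid}: distinct secant planes cut distinct $(q+1)$-sections, and any $4$ coplanar ovoid points with no $3$ collinear carry a one-dimensional space of dependencies), the rest of your argument goes through as planned.
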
 

Another open problem is if the $3$-designs held in the codes $\C(r, N)$ are equivalent to 
those held in the ovoid code from the elliptic quadric or the Tits ovoid.


\begin{thebibliography}{99} 

\bibitem[{Barlotti}(1955)]{Barlotti} 
Barlotti, A. (1955). Un\'estensione del teorema di Segre-Kustaanheimo, \emph{Boll. Un. Mat. Ital. Ser. III} 
\textbf{10}, pp. 498--506. 

\bibitem[{Baumert, Mills and Ward(1982)}]{BMW82} 
Baumert, L.~D., Mills, W.~H. and Ward, R.~L. (1982). Uniform cyclotomy, \emph{J. Number Theory}  \textbf{14}, pp. 67--82.  

\bibitem[{Bierbrauer(2017)}]{Bier} 
Bierbrauer, J. (2017). \emph{Introduction to Coding Theory}, Second Edition (CRC Press, New York). 

\bibitem[{Bose(1947)}]{Bose47} 
Bose, R. C. (1947). Mathematical theory of the symmetrical factorial design, \emph{ Sankhya} \textbf{8}, pp. 107--166.   

\bibitem[{Ding and Yang(2013)}]{DY13}
Ding, C. and Yang, J. (2013). Hamming weights in irreducible cyclic codes, \emph{Disc.  Math.} \textbf{313}, 4, pp. 434--446. 

\bibitem[{Hirschfeld and Storme(1998)}]{HS98} 
Hirschfeld, J. W. P. and Storme, L. (1998). The packing problems in statistics, coding theory and 
finite projective spaces, \emph{J. Statist. Plann. Inference} \textbf{72}, pp. 355--380.  

\bibitem[{Panella(1955)}]{Panella}
Panella, G. (1955). Caratterizzazione delle quadriche di uno spazio (tridimensionale) lineare
sopra un corpo finito, \emph{Boll. Un. Mat. Ital. Ser. III} \textbf{10}, pp. 507--513. 

\bibitem[{O'Keefe(1996)}]{OKeefe} 
O'Keefe, C. M. (1996). Ovoids in $\PG(3, q)$: a survey, 
\emph{Disc. Math.} \textbf{151}, pp. 175--188. 

\bibitem[{Qvist(1952)}]{Qvist52} 
Qvist, B. (1952). Some remarks concerning curves of the 2nd degree in a finite plane, 
\emph{Ann. Acad. Sci. Fennica Ser. A} \textbf{134}, pp. 5--27. 

\bibitem[{Seiden(1950)}]{Seiden50} 
Seiden, E. (1950). A theorem in finite projective geometry and an application to statistics, 
\emph{Proc. Amer. Math. Soc. } \textbf{1}, pp. 282--286. 


\bibitem[{Tits(1960)}]{Tits60} 
Tits, J. (1960). Les groupes simples de Suzuki et de Ree, 
\emph{S\'em. Bourbaki} \textbf{210}, pp. 1--18. 

\bibitem[{Willems(1999)}]{Willems}
Willems, W. (1999). \emph{Codierungstheorie} (deGruyter, Berlin). 


\end{thebibliography}
\end{document}